\newcommand{\F}{\mathcal F}
\newcommand{\A}{\mathfrak A}
\newcommand{\eps}{\varepsilon}
\newcommand{\R}{\mathbb R}
\newcommand{\B}{\mathcal B}
\renewcommand{\P}{\mathbb P}
\newcommand{\E}{\mathbb E}
\renewcommand{\d}{\,\mathrm{d}}
\DeclareMathSymbol{\mlq}{\mathord}{operators}{``}
\DeclareMathSymbol{\mrq}{\mathord}{operators}{`'}
\newcommand{\nin}{\not\in}
\newcommand{\nocomma}{}
\newcommand{\tmaffiliation}[1]{\\ #1}
\newcommand{\tmem}[1]{{\em #1\/}}
\newcommand{\tmstrong}[1]{\textbf{#1}}
\newenvironment{enumeratenumeric}{\begin{enumerate}[1.] }{\end{enumerate}}
\newenvironment{enumerateroman}{\begin{enumerate}[i.] }{\end{enumerate}}
\newenvironment{proof}{\noindent\textbf{Proof\ }}{\hspace*{\fill}$\Box$\medskip}
\newtheorem{definition}{Definition}
{\theorembodyfont{\rmfamily}}
\newtheorem{lemma}{Lemma}
{\theorembodyfont{\rmfamily}\newtheorem{remark}{Remark}}
\newtheorem{theorem}{Theorem}
\begin{document}

\title{Pointwise defined version of conditional expectation with respect to a random variable}

\author{
  Philipp Wacker
  \tmaffiliation{FAU Erlangen-Nürnberg}
}

\maketitle

\begin{abstract}
It is often of interest to condition on a singular event given by a random variable, e.g. $\{Y=y\}$ for a continuous random variable $Y$. Conditional measures with respect to this event are usually derived as a special case of the conditional expectation with respect to the random variables generating sigma algebra. The existence of the latter is usually proven via a non-constructive measure-theoretic argument which yields an only almost-everywhere defined quantity. In particular, the quantity $\E[f|Y]$ is initially only defined almost everywhere and conditioning on $Y=y$ corresponds to evaluating $\E[f|Y=y] = \E[f|Y]{Y=y}$, which is not meaningful because of $\E[f|Y]$ not being well-defined on such singular sets. This problem is not addressed by the introduction of regular conditional distributions, either. On the other hand it can be shown that the naively computed conditional density $f_{Z|Y=y}(z)$ (which is given by the ratio of joint and marginal densities) is a version of the conditional distribution, i.e. $\E[\{Z\in B\}|Y=y] = \int_B f_{Z|Y=y}(z) dz$ and this density can indeed be evaluated pointwise in $y$. This mismatch between mathematical theory (which generates an object which cannot produce what we need from it) and practical computation via the conditional density is an unfortunate fact. Furthermore, the classical approach does not allow a pointwise definition of conditional expectations of the form $\E[f|Y=y]$, only of conditional distributions $\E[\{Z\in B\}|Y=y]$. We propose a (as far as the author is aware) little known approach to obtaining a pointwise defined version of conditional expectation by use of the Lebesgue-Besicovich lemma without the need of additional topological arguments which are necessary in the usual derivation. 
\end{abstract}

\section{Introduction}
Conditioning with respect to singular events is a tricky subject. For concreteness, let's assume that we would like to condition the distribution of a random variable $Z$ on the singular event $Y=0$ for a different continuous random variable $Y$.

The usual path taken to give it well-defined meaning (see for example \cite{klenke2013probability} for a nice and classical exposition) is by defining \textit{conditional expectations} of a generic random variable $X$ with respect to a $\sigma$-algebra $\mathcal E$ first, in symbols $\E[X|\mathcal E]$. 

This object is fairly abstract and its existence is derived in a non-constructive way via the Radon--Nikodym theorem. Furthermore, it is only unique up to sets of measure $0$.

Then the notion of \textit{conditional probability} of $A$ with respect to $\mathcal E$ is obtained by setting $X = \chi_A$. This is then written as $\P(A|\mathcal E)$. Lastly, it is possible to further concretize to \textit{conditional distributions} by setting $A = \{Z\in B\}$, which is $\P(Z \in B| \mathcal E)$. There is some issue with the fact that the object $\P(Z\in B|\mathcal E)$ is defined only up to subsets of measure $0$ for a fixed set $B$. If we want to interpret this object as a function of $B$ we need to make sure that this ambiguity does not amplify to the full space (each set $B$ has an ambiguity set of measure $0$ but the (uncountable) union over all ambiguity sets of all sets $B$ of interest could amount to a non-negligible set). This problem is then solved by employing additional topological arguments, the notion of regular conditional distribution and the theory of Borel spaces (at which point most students, and many professional mathematicians as well, lose focus and decide to just believe the author that they know what they are doing). 

After having solved this problem, the fact remains that by derivation from its ``grandparent'' $\E[X|\mathcal E]$, the object $\P(Z\in B|\mathcal E)$ shares its unfavorable properties (non-constructive, non-uniqueness pointwise). If we now want to give meaning to $\P(Z \in B|Y=0)$, we need to concretize further by setting $\mathcal E = \sigma(Y)$. Then it can be shown that $\P(Z\in B|Y=y) = \P(Z \in B|\sigma(Y))(\{Y = y\})$ makes sense, but (due too its construction) \textit{not pointwise} in $y$. 

At this point the mathematical exposition is complete and the notion of conditional densities is revealed. This very easily computable quantity (which is just the ratio of the joint density of $Z,Y$ and the marginal density of $Y$) is then accepted as a ``version'' of the non-pointwise defined (and non-constructively introduced) conditional distribution. 

The fact that conditional distributions are not defined in a pointwise sense (i.e. on singular events) is highly unsatisfactory, because we are explicitly interested \textit{only} in pointwise evaluation on an event like $\{Y=0\}$.

The fundamental property of conditional expectation (see item ii. in definition \ref{def:condExp} can be shown to hold and from this point on the reader will rarely think about the original derivation of this object again and instead just use the computable conditional density instead. There is a certain mismatch here which some authors \cite{chang1997conditioning} have deplored. In particular it is unsatisfying that there is heavy mathematical machinery needed in order to non-constructively guarantee the existence of an object which is not defined pointwise (but which is only interesting for us in a pointwise sense), when in practice we only use the computable conditional density instead.

\begin{figure}[hbtp]
\centering
\includegraphics[width=\textwidth]{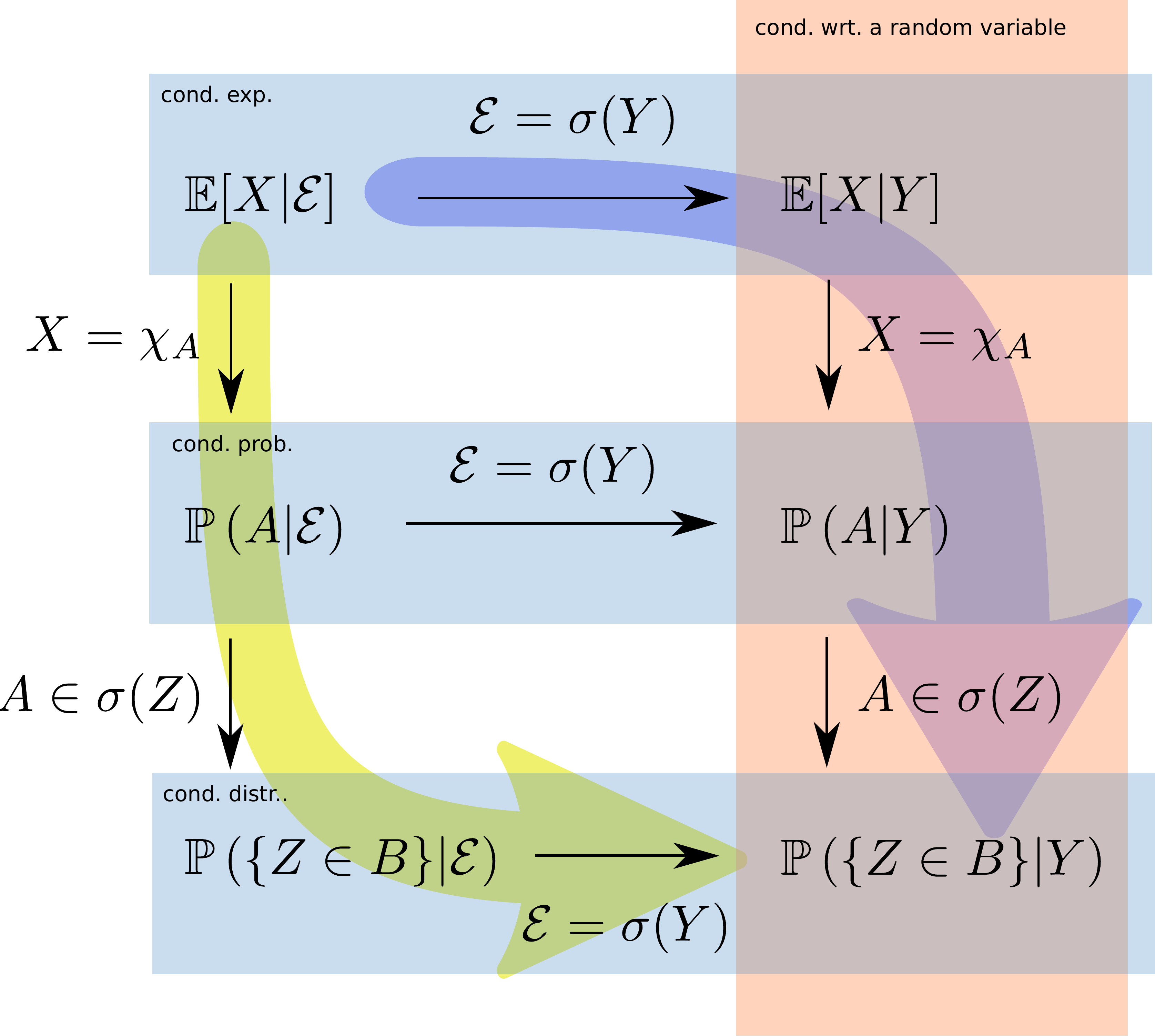}
\caption{Relationship of various conditionals. The notion of regular conditional distribution for example lives on the third row, whereas the main ideas of the manuscript live in the second column.}\label{fig:systematic}
\end{figure}

The underlying mathematical problem is the generality of the $\sigma$-algebra $\mathcal E$ we condition on. The classical derivation outlined so far follows the yellow (counterclockwise) arrow in figure \ref{fig:systematic}. This is the best we can do for general $\mathcal E$. But if we are explicitly interested in conditioning on a random variable $Y$, we can do much better, which is the point of this manuscript. By restricting to the case $\mathcal E = \sigma(Y)$ right away (i.e. following the blue, clockwise, arrow), we will show that it is possible to rigorously derive the existence of conditional distributions with respect to another random variable without the need for additional topological arguments. In particular, the notion of regular conditional distribution becomes unnecessary with this approach. Additionally, we can give conditional expectations, conditional probabilities and conditional distributions (all with respect to the random variable $Y$) a pointwise meaning in the sense that there is a canonical version of it which can be evaluated pointwise. The latter object is identical to the conditional densities used in practice. This procedure can be likened to the idea that while the constant function $f(x)=1$ as an object $f\in L^1([0,1])$ cannot be evaluated pointwise, it is possible to pick its continuous representative $f\in C([0,1])$ which can be evaluated pointwise.

This method is supposed to simplify an exposition of conditional distributions and can be used in a probability course. It also (to a large extent) resolves the mismatch between theoretical derivation and practical computation mentioned above. Of course, this higher specificity comes with the price of less generality as it requires us to pick a $\sigma$-algebra generated by a random variable but the author knows few interesting applications where this is not the case. If one needs conditional expectation in its full generality, then the classical exposition via regular conditional distributions etc. is of course still necessary.

The main part of the article starts with a recap of conditioning and then continues with the exposition of the proposed didactical approach. Readers familiar with the material can safely skim or skip sections \ref{sec:cond_1} and \ref{sec:cond_2}.

This manuscript's novelty does not originate in a lot of new mathematics but rather in the usefulness of its didactical approach. The mathematical methods are basic measure theory and some calculus which make them suitable for presentation in a standard probability course.

\section{Conditioning on regular events and with respect to
power set $\sigma$-Algebras} \label{sec:cond_1}

In order to keep the presentation self-contained and didactically continuous, we initially follow the lucid exposition in \cite{klenke2013probability}.

We assume that the reader is already familiar with basic conditional
probability theory with respect to non-singular events (e.g. ``Given the information that the sum of two dice is
$9$, what is the probability for the first dice to show a $5$?'') and that they
have an understanding (both mathematical and intuitive) of the following topics:

\begin{theorem}
  Law of total probability
  
  Let $(\Omega, \mathfrak{A}, \mathbb{P})$be a probability space and $(B_i)_{i
  \in I}$ an at-most countable collection of disjoint sets with $\mathbb{P}
  \left( \uplus_{i \in I} B_i \right) = 1$. Then for every event $A \in
  \mathfrak{A}$
  \[ \mathbb{P} (A) = \sum_{i \in I} \mathbb{P} (A|B_i) \cdot \mathbb{P} (B_i)
     . \]
\end{theorem}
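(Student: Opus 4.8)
The plan is to reduce everything to countable additivity of $\mathbb{P}$ together with the defining relation $\mathbb{P}(A \mid B_i) = \mathbb{P}(A \cap B_i)/\mathbb{P}(B_i)$. First I would set $B := \uplus_{i \in I} B_i$ and note that the hypothesis gives $\mathbb{P}(B) = 1$, hence $\mathbb{P}(\Omega \setminus B) = 0$. Splitting $A = (A \cap B) \uplus (A \setminus B)$ and using monotonicity to discard the null part yields $\mathbb{P}(A) = \mathbb{P}(A \cap B)$.

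Next I would observe that since the $B_i$ are pairwise disjoint, so are the sets $A \cap B_i$, and $A \cap B = \uplus_{i \in I}(A \cap B_i)$. Applying $\sigma$-additivity of $\mathbb{P}$ (or mere finite additivity when $I$ is finite) then gives $\mathbb{P}(A \cap B) = \sum_{i \in I} \mathbb{P}(A \cap B_i)$, which is where the countability of $I$ is used so that the (possibly infinite) sum is meaningful.

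It remains to identify each summand with $\mathbb{P}(A \mid B_i)\,\mathbb{P}(B_i)$. For indices $i$ with $\mathbb{P}(B_i) > 0$ this is exactly the definition of conditional probability. For indices with $\mathbb{P}(B_i) = 0$, monotonicity gives $\mathbb{P}(A \cap B_i) \le \mathbb{P}(B_i) = 0$, so that summand vanishes and still agrees with the product $\mathbb{P}(A\mid B_i)\,\mathbb{P}(B_i)$ under the standing convention that this product is read as $0$ (the factor $\mathbb{P}(A\mid B_i)$ being otherwise undefined). Substituting back completes the argument.

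The only point requiring care — and the closest thing to an obstacle in an otherwise routine proof — is precisely this handling of null-probability blocks: one must either assume $\mathbb{P}(B_i) > 0$ for every $i$, or adopt the convention that terms with $\mathbb{P}(B_i) = 0$ contribute $0$ to the sum. Everything else is bookkeeping: the disjointness hypothesis is exactly what licenses additivity, and the assumption $\mathbb{P}(\uplus_i B_i) = 1$ is exactly what lets us replace $A$ by $A \cap B$ at the outset.
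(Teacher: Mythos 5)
Your proof is correct and complete; it is the standard argument via countable additivity applied to the disjoint decomposition $A\cap B=\uplus_{i\in I}(A\cap B_i)$ after discarding the null complement of $\uplus_i B_i$. The paper itself offers no proof of this theorem (it is stated as assumed background), so there is nothing to compare against; note only that your treatment of indices with $\mathbb{P}(B_i)=0$ is consistent with the paper's own convention in Definition \ref{def:condExpRVRegular}, which assigns the value $0$ to conditional expectations given null events, so the product $\mathbb{P}(A\mid B_i)\,\mathbb{P}(B_i)$ is unambiguously $0$ there.
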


\begin{theorem}
  Bayes' theorem
  
  Let $(\Omega, \mathfrak{A}, \mathbb{P})$be a probability space and $(B_i)_{i
  \in I}$ an at-most countable collection of disjoint sets with $\mathbb{P}
  \left( \uplus_{i \in I} B_i \right) = 1$. Then for every event $A \in
  \mathfrak{A}$ having probability $\mathbb{P} (A) > 0$ and every $k \in I$
  \[ \mathbb{P} (B_k |A) = \frac{\mathbb{P} (A|B_k) \cdot \mathbb{P}
     (B_k)}{\sum_{i \in I} \mathbb{P} (A|B_i) \cdot \mathbb{P} (B_i)} . \]
\end{theorem}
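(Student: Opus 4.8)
The plan is to reduce Bayes' theorem to two ingredients that are already available: the defining relation for conditional probability with respect to a non-singular event (which the reader is assumed to know) and the law of total probability, stated as the immediately preceding theorem. First I would use the hypothesis $\mathbb{P}(A) > 0$ to write $\mathbb{P}(B_k | A) = \mathbb{P}(B_k \cap A) / \mathbb{P}(A)$, which is legitimate precisely because of that hypothesis. This isolates the numerator $\mathbb{P}(B_k \cap A)$ and the denominator $\mathbb{P}(A)$ as the two quantities that remain to be rewritten in the desired form.

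For the numerator, I would apply the symmetric form of the definition of conditional probability, $\mathbb{P}(B_k \cap A) = \mathbb{P}(A | B_k)\, \mathbb{P}(B_k)$, which is valid whenever $\mathbb{P}(B_k) > 0$. For the denominator, I would invoke the law of total probability (the preceding theorem) applied to the partition $(B_i)_{i \in I}$, which yields $\mathbb{P}(A) = \sum_{i \in I} \mathbb{P}(A | B_i)\, \mathbb{P}(B_i)$. Substituting both expressions into the ratio $\mathbb{P}(B_k \cap A)/\mathbb{P}(A)$ gives exactly the claimed identity, so the computation itself is essentially a one-line substitution.

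The only point that requires genuine care — and hence the main obstacle — is the treatment of indices $i$ for which $\mathbb{P}(B_i) = 0$, since then the symbol $\mathbb{P}(A | B_i)$ is not defined. I would handle this exactly as in the statement of the law of total probability: adopt the convention that any summand with $\mathbb{P}(B_i) = 0$ is read as $0$ (equivalently, restrict the sum to those $i$ with $\mathbb{P}(B_i) > 0$), and observe that for such $i$ the event $B_i \cap A$ is null as well, so discarding it affects neither $\mathbb{P}(A)$ nor the numerator. A secondary, essentially trivial check covers the degenerate case $\mathbb{P}(B_k) = 0$: then $\mathbb{P}(B_k | A) = \mathbb{P}(B_k \cap A)/\mathbb{P}(A) = 0$ and the right-hand side is $0$ as well, so the formula still holds. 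Finally, since $\mathbb{P}(A) > 0$, the denominator $\sum_{i \in I} \mathbb{P}(A | B_i)\, \mathbb{P}(B_i)$ equals $\mathbb{P}(A)$ and is therefore strictly positive, so the right-hand side is well defined and no division by zero occurs.
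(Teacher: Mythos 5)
Your argument is the standard and correct one: expand $\mathbb{P}(B_k|A)$ as $\mathbb{P}(B_k\cap A)/\mathbb{P}(A)$, rewrite the numerator as $\mathbb{P}(A|B_k)\,\mathbb{P}(B_k)$, and replace the denominator using the law of total probability, with the null-$B_i$ summands handled by the usual convention. The paper itself gives no proof of this theorem --- it is listed among the prerequisites the reader is assumed to already know --- so there is nothing to compare against; your write-up, including the check that the degenerate case $\mathbb{P}(B_k)=0$ still yields a true identity, is complete and would serve as the omitted proof.
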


\begin{definition}\label{def:condExpRVRegular}
  Conditional expectation of random variables on regular events
  
  Let $X \in L^1 (\mathbb{P})$(i.e. X has a finite ``ordinary'' expectation)
  and $A \in \mathfrak{A}$ be an event with probability $\mathbb{P} (A) > 0$.
  Then we define
  \begin{equation}
    \label{eq1} \mathbb{E} [X|A] \equiv \int X (\omega) \mathbb{P} \left(
    \mathrm{d} \omega |A \right) = \frac{\mathbb{E} \left[ \mathrm{} 1_A
    \nocomma X \right]}{\mathbb{P} (A)} = \frac{\mathbb{E} [1_A X]}{\mathbb{E}
    [1_A]}
  \end{equation}
  For $A \in \mathfrak{A}$ with probability $\mathbb{P} (A) = 0$ we set
  $\mathbb{E} [X|A] = 0$.
\end{definition}

According to the last term we can interpret the conditional expectation as the
center of mass of $X$ on $A$, just like the common expectation $\mathbb{E}
[X]$ can be thought of as the center of ``probability mass'' on the whole
probability space. 
\begin{figure}[h!]
 \includegraphics[width=\textwidth]{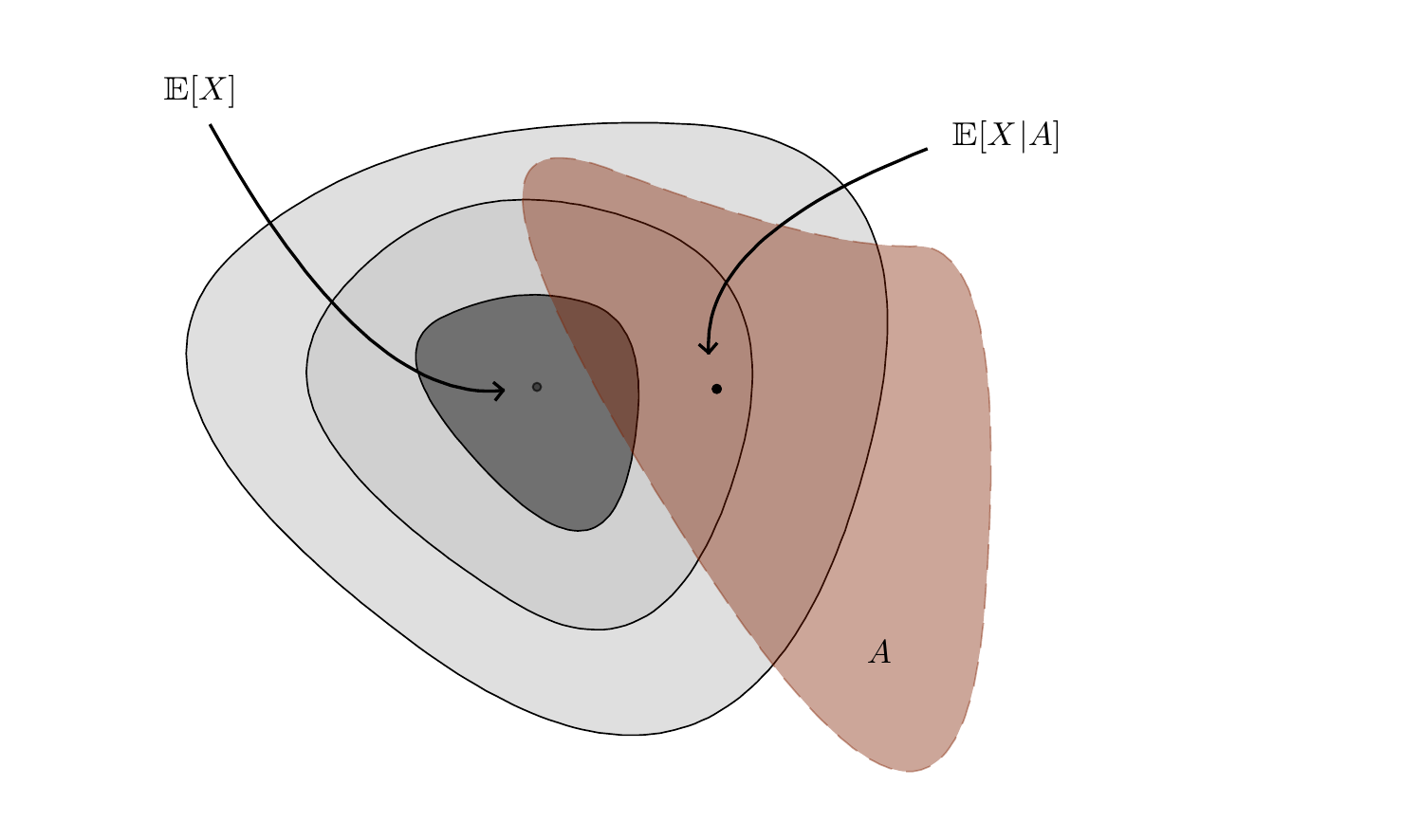}
  \caption{A visualization of conditional expectation: The contour plot in
  grey denotes contour lines of the density function. The expectation value
  will in this case be near the maximum of the density function as there is a
  lot of probability mass around it. The shade in red denotes a measurable set
  (event) $A$. The center of mass of $X$'s probability distribution
  conditioned on $A$ is depicted as well.}
\end{figure}

Having defined conditional expectations on specific events $A$ by $\mathbb{E}
[X|A]$ we could ask ourselves if we can generalize that notion to collections
of sets $A$, in particular $\sigma$-algebras.

Consider a common dice with six sides. We choose the probability space
canonically: $\Omega = \{ 1, 2, \ldots, 6 \}$ with elementary probabilities
$\mathbb{P} (\{ 1 \}) = \cdots =\mathbb{P} (\{ 6 \}) = \frac{1}{6}$. As
$\sigma$-algebras we take $\mathfrak{A}= \{ \emptyset, \{ 1, 2 \}, \{ 3,
\ldots, 6 \}, \Omega \}$, a $\sigma$-algebra ``unable to make distinctions''
for example between $1$ and $2$. Denote $A_1 = \{ 1, 2 \}$ and $A_2 = \{ 3, \ldots, 6 \}$ for
brevity:
\begin{eqnarray*}
  \mathbb{E} [X|A_1] & = & \frac{\mathbb{E} \left[ \mathrm{} 1_{A_1} \nocomma
  X \right]}{\mathbb{P} (A_1)} = \frac{\frac{1}{6} \cdot (1 + 2)}{\frac{2}{6}}
  = \frac{3}{2}\\
  \mathbb{E} [X|A_2] & = & \frac{\mathbb{E} \left[ \mathrm{} 1_{A_2} \nocomma
  X \right]}{\mathbb{P} (A_2)} = \frac{\frac{1}{6} \cdot (3 + 4 + 5 +
  6)}{\frac{4}{6}} = \frac{9}{2}
\end{eqnarray*}
The two remaining expectations are $\mathbb{E} [X| \emptyset] = 0$ and
$\mathbb{E} [X| \Omega] =\mathbb{E} [X] = \frac{7}{2}$.

\begin{figure}[h]
 \includegraphics[width=\textwidth]{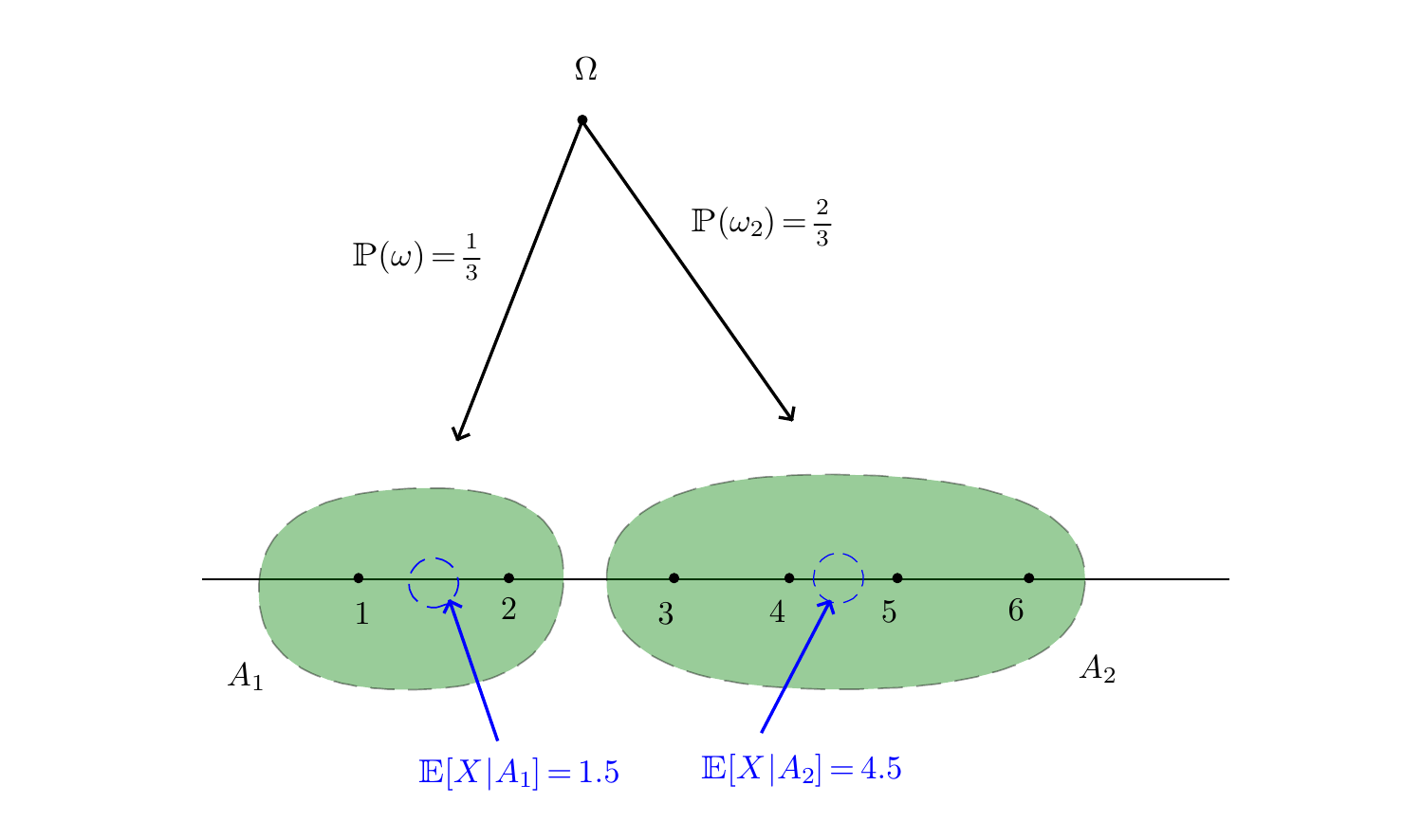}
  \caption{Conditional expectation with respect to a $\sigma$-Algebra as a
  random variable.}
\end{figure}

Formalized, this gives rise to the following definition:

\begin{definition}
  \label{def:condExpCountable}Conditional expectation with respect to a
  countable collection of events.
  Consider a probability space $(\Omega, \mathcal A, \P)$ and let $(B_i)_{i \in I}\subset \mathcal F$ be an {\tmstrong{at-most countable collection of
  disjoint sets with $\P(B_i)>0$ for all $i$}}  such that 
  \[\bigcup_{i \in I} B_i = \Omega.\]
  We define $\mathcal F := \sigma(\{B_i\})$, which is the power set of $\{B_i\}_i$.
  
  Let $X \in L^1 (\mathbb{P})$ be a random variable. We define {\tmem{the
  conditional expectation of}} $X$ {\tmem{given}} $\mathcal{F}$ as the
  {\tmstrong{random variable}}
  \[ \mathbb{E} [X|\mathcal{F}] (\omega) =\mathbb{E} [X|B_i] \quad
     \Leftrightarrow \quad \omega \in B_i . \]
\end{definition}

\begin{lemma}
  \label{lem:regCondExp}
  
  The random variable from Definition \ref{def:condExpCountable} has the
  following properties:
  \begin{itemize}
    \item $\mathbb{E} [X|\mathcal{F}]$ is measurable with respect to
    $\mathcal{F}$.
    
    \item $\mathbb{E} [X|\mathcal{F}] \in L^1 (\mathbb{P})$ and for every $A
    \in \mathcal{F}$
    \[ \int_A \mathbb{E} [X|\mathcal{F}] \mathrm{d} \mathbb{P}= \int_A X
       \mathrm{d} \mathbb{P}. \]
    In particular, $\mathbb{E} [\mathbb{E} [X|\mathcal{F}]] =\mathbb{E} [X]$
  \end{itemize}
\end{lemma}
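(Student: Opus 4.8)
The plan is to verify the two bullet points directly from the explicit definition $\mathbb{E}[X|\mathcal F](\omega) = \mathbb{E}[X|B_i]$ for $\omega\in B_i$, since everything here is finitary in character (an at-most countable partition into sets of positive probability). First I would record that $\{B_i\}_{i\in I}$ partitions $\Omega$, so the map $\omega\mapsto\mathbb{E}[X|\mathcal F](\omega)$ is well-defined, and that by construction it is constant on each atom $B_i$. Measurability with respect to $\mathcal F=\sigma(\{B_i\})$ then follows because the preimage of any Borel set is a union of some sub-collection of the $B_i$ (the sets on which the constant values land in that Borel set), and any such union lies in $\mathcal F$ since $\mathcal F$ is the power set of the collection of atoms; this handles the first bullet.

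For the second bullet I would first establish integrability. Writing $c_i := \mathbb{E}[X|B_i] = \mathbb{E}[1_{B_i}X]/\mathbb{P}(B_i)$ from Definition \ref{def:condExpRVRegular}, I have $\mathbb{E}[X|\mathcal F] = \sum_{i\in I} c_i\, 1_{B_i}$, and I would bound $\mathbb{E}|\mathbb{E}[X|\mathcal F]| = \sum_i |c_i|\,\mathbb{P}(B_i) \le \sum_i \mathbb{E}[1_{B_i}|X|] = \mathbb{E}|X| < \infty$, using $|c_i|\,\mathbb{P}(B_i) = |\mathbb{E}[1_{B_i}X]| \le \mathbb{E}[1_{B_i}|X|]$ and then monotone convergence (or countable additivity of the integral) to sum the pieces back to $\mathbb{E}|X|$. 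This gives $\mathbb{E}[X|\mathcal F]\in L^1(\mathbb{P})$.

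Next I would prove the defining integral identity. Since every $A\in\mathcal F$ is a disjoint union $A = \biguplus_{i\in J} B_i$ for some $J\subseteq I$, it suffices by countable additivity of both sides to check the identity on a single atom $B_j$. There, $\int_{B_j}\mathbb{E}[X|\mathcal F]\,\mathrm d\mathbb{P} = c_j\,\mathbb{P}(B_j) = \mathbb{E}[1_{B_j}X] = \int_{B_j} X\,\mathrm d\mathbb{P}$, which is exactly the content of Definition \ref{def:condExpRVRegular} rearranged. Summing over $i\in J$ — justified for the left side by the $L^1$ bound just proved and dominated convergence, and for the right side likewise using $X\in L^1$ — yields $\int_A \mathbb{E}[X|\mathcal F]\,\mathrm d\mathbb{P} = \int_A X\,\mathrm d\mathbb{P}$. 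Taking $A=\Omega$ gives $\mathbb{E}[\mathbb{E}[X|\mathcal F]] = \mathbb{E}[X]$.

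The only genuinely delicate point is the interchange of summation and integration when $I$ is countably infinite; everything else is bookkeeping. I would dispatch it cleanly by first proving absolute summability ($\sum_i \mathbb{E}[1_{B_i}|X|] = \mathbb{E}|X| < \infty$ via monotone convergence on the increasing partial sums $\sum_{i\le n}1_{B_i}|X|\uparrow|X|$), after which the signed series $\sum_i c_i 1_{B_i}$ and the identity on general $A\in\mathcal F$ both follow from dominated convergence with dominating function $|X|$. If one prefers, the edge case $\mathbb{P}(B_i)=0$ never arises here since the hypothesis stipulates $\mathbb{P}(B_i)>0$ for all $i$, so the ratio defining $c_i$ is always legitimate and no separate convention is needed.
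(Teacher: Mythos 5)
Your proof is correct and complete. The paper itself supplies no proof of Lemma \ref{lem:regCondExp} (the following remark merely asserts that measurability and the integral identity are ``consequences of the definition''), and your direct verification --- checking the identity atom-by-atom on the $B_i$ and using monotone/dominated convergence to handle a countably infinite index set --- is precisely the standard argument the paper leaves implicit.
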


\begin{remark}\label{rem:prog}
\begin{itemize}
\item Note that the measurability criterion is actually a \textit{restriction} on $\E[X|\mathcal F]$. In particular, $\E[X|\mathcal F]$ needs to be constant on the sets $B_i$.
\item Measurability and the integral condition are consequence of the definition of $\E[X|\mathcal F]$ here. For more general $\sigma$-algebras $\mathcal G$ which are not of the form as in definition \ref{def:condExpCountable}, these will be defining conditions rather than consequences.
\item The last property can be read as ``The mean value of all centers of masses of
  disjoint subsets is equal to the actual center of mass''.
  \item Note that we took the following sequence of steps while
  defining conditional probabilities:
  \begin{enumeratenumeric}
    \item Define conditional expectations $\mathbb{E} [X|A]$ on individual
    events $A \in \mathfrak{A}$.
    
    \item Generalize to conditional expectations $\mathbb{E} [X|\mathcal{F}]$
    on a (certain type of) $\sigma$-Algebra.
  \end{enumeratenumeric}
  This is a natural way of introducing conditional expectations on ``simple''
  events as the expectations $\mathbb{E} [X|A]$ are easily defined but the
  progression will be reversed for more general types of conditional
  expectations: The value of $\mathbb{E} [X|Y = y]$ for singular events $\{ Y
  = y \}$ will need to be derived from the notion of conditional expectation $\E[X|\sigma(Y)]$ which we still need to define.
  
\end{itemize}
  
\end{remark}

\section{Conditional expectation with respect to a general $\sigma$-Algebra} \label{sec:cond_2}

\subsection{Why do we need all that?}
Consider the following example: We are given a value $Y =y\in\R$ where the random variable $Y$ is modelled by $Y = X + \varepsilon$ with $X\sim N(0,\sigma^2)$ and $\varepsilon\sim N(0, \gamma^2)$. Intuitively, knowledge of the realization $Y=y$ should change our belief about $X$ and textbooks readily furnish the (very easy) formula that describes this updating process, yielding the posterior measure on $X|(Y=y)$. We can't solve that problem with our current machinery, though: The conditional cumulative distribution function is given by
\[ \mathbb{P} (X \leq x|Y = y) =\mathbb{E} [1_{\{ X \leq x \}} | Y=y ] \]
Tempted to use {\eqref{eq1}}, we would obtain an invalid expression: $\{ Y =
y \}$ is a {\tmem{singular event}} in our case, so its probability is $0$.
We will derive a better notion of conditional expectation for singular events
in the next section, but first we need conditional expectations on
$\sigma$-Algebras, as announced in Remark \ref{rem:prog}.

\subsection{Conditional expectations}

Let $(\Omega, \mathfrak{A}, \mathbb{P})$ be a probability space, $\mathcal{F}
\subset \mathfrak{A}$ be a $\sigma$-Algebra and $X \in L^1 (\Omega,
\mathfrak{A}, \mathbb{P})$. As announced before, measurability and integration property are definining conditions for candidates of a conditional expectation.
\begin{definition}
  \label{def:condExp}
  
  The random variable $Y$ is called {\tmem{conditional expectation of $X$
  given $\mathcal{F}$}}, in symbols $Z =\mathbb{E} [X|\mathcal{F}]$ if
  \begin{enumerateroman}
    \item $Z$ is measurable with respect to $\mathcal{F}$ and
    
    \item For every $A \in \mathcal{F}$ one has $\mathbb{E} [1_A X]
    =\mathbb{E} [1_A Z]$.
  \end{enumerateroman}
  For $B \in \mathcal{A}$ we call $\mathbb{P} [B|\mathcal{F}] \equiv
  \mathbb{E} [1_B |\mathcal{F}]$ the {\tmem{conditional probability of $B$
  given $\mathcal{F}$}}.
  
  For a random variable $Y$ we call $\mathbb{E} [X|Y] \equiv \mathbb{E} [X|
  \sigma (Y)]$ the conditional expectation of $X$ given $Y$.
\end{definition}

\begin{theorem}
  $\mathbb{E} [X|\mathcal{F}]$ exists and is unique a.s.
\end{theorem}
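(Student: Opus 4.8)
The plan is to prove existence and almost-sure uniqueness separately, following the standard Radon--Nikodym route (this is the ``yellow arrow'' the introduction contrasts with).

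\medskip

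\noindent\textbf{Uniqueness.} Suppose $Z_1$ and $Z_2$ both satisfy (i) and (ii). Then $Z_1 - Z_2$ is $\mathcal F$-measurable, and for every $A \in \mathcal F$ we have $\mathbb E[1_A(Z_1 - Z_2)] = 0$. First I would apply this to $A = \{Z_1 - Z_2 > \eps\}$ (which lies in $\mathcal F$ by measurability) to conclude $\mathbb P(Z_1 - Z_2 > \eps) = 0$ for every $\eps > 0$, hence $Z_1 \le Z_2$ a.s.; by symmetry $Z_1 = Z_2$ a.s. One must be slightly careful about integrability when taking $A = \{Z_1 > Z_2\}$ directly, which is why the truncation by $\eps$ is convenient; then let $\eps \downarrow 0$ along a countable sequence.

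\medskip

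\noindent\textbf{Existence.} The plan is to reduce to the Radon--Nikodym theorem. First assume $X \ge 0$. Define a set function on $\mathcal F$ by
\[
  \nu(A) := \mathbb E[1_A X] = \int_A X \, \d\mathbb P, \qquad A \in \mathcal F .
\]
I would check that $\nu$ is a finite measure on $(\Omega, \mathcal F)$: finiteness is $\nu(\Omega) = \mathbb E[X] < \infty$ since $X \in L^1$, and countable additivity follows from monotone convergence applied to the partial sums $1_{\uplus_{k\le n} A_k} X$. Crucially $\nu \ll \mathbb P|_{\mathcal F}$: if $\mathbb P(A) = 0$ then $1_A X = 0$ a.s., so $\nu(A) = 0$. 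The Radon--Nikodym theorem then furnishes an $\mathcal F$-measurable density $Z = \frac{\d\nu}{\d\mathbb P|_{\mathcal F}} \ge 0$ with $\nu(A) = \int_A Z \, \d\mathbb P$ for all $A \in \mathcal F$, which is exactly property (ii) for nonnegative $X$, and taking $A = \Omega$ gives $\mathbb E[Z] = \mathbb E[X] < \infty$, so $Z \in L^1$. For general $X \in L^1$, I would split $X = X^+ - X^-$, obtain $Z^+ = \mathbb E[X^+|\mathcal F]$ and $Z^- = \mathbb E[X^-|\mathcal F]$ as above, and set $Z := Z^+ - Z^-$; this is $\mathcal F$-measurable and in $L^1$, and property (ii) follows by linearity of the integral since both $Z^\pm$ are integrable.

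\medskip

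I expect the only real subtlety to be the careful verification that $\nu$ is a genuine (countably additive, finite) measure and that absolute continuity holds on the restricted $\sigma$-algebra $\mathcal F$ rather than on $\mathfrak A$ — this is precisely the point where one must restrict $\mathbb P$ to $\mathcal F$ before invoking Radon--Nikodym, so that the resulting density is $\mathcal F$-measurable as required by (i). Everything else (monotone convergence, linearity, the $\eps$-truncation in the uniqueness argument) is routine. If the text has not yet stated the Radon--Nikodym theorem, I would insert its statement as a cited black box, since the introduction already flags that this existence proof is the non-constructive one being contrasted with the paper's approach.
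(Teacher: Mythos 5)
Your proof is correct and is exactly the standard Radon--Nikodym argument that the paper itself invokes: the paper's ``proof'' is only a citation to \cite{klenke2013probability} together with the remark that existence is obtained non-constructively via Radon--Nikodym, and you have simply written out that standard argument (positive part first, then $X = X^+ - X^-$, with the usual $\eps$-level-set uniqueness argument), all of which checks out. No discrepancy with the paper's intended route.
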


\begin{proof}
  For a proof see for example {\cite{klenke2013probability}}. This is always non-constructive via the Radon--Nikodym lemma.
\end{proof}

\begin{remark}
  We already defined the more elementary $\E[X|\mathcal G]$ for $\mathcal{G}= \sigma (\{ B_i \}_{i \in I})$
  with $\P(B_i)>0$ and  $\uplus_{i \in I} B_i = \Omega$. The new definition overloads this notation to a more general case of an arbitrary $\sigma$-algebra. This is justified because the elementary (and constructive) notion of conditional
  expectation from definition \ref{def:condExpCountable} was shown (in lemma \ref{lem:condexpptw}) to fulfil definition \ref{def:condExp} and is by uniqueness thus
  almost everywhere identical to the more general notion $\E[X|\mathcal G]$ from definition \ref{def:condExp}.
\end{remark}

\section{Conditioning with respect to singular events of a random variable $Y$}
In this (main) section we argue how conditional expectations with respect to a random variable can be defined pointwise and why this is not true for conditional expectations with respect to arbitrary $\sigma$-algebras in general.

Note that this has nothing to do with the notion of regular conditional probabilities, which is an idea located in the third row of figure \ref{fig:systematic}. We will in contrast talk about the second column of that figure.
\subsection{From $\E[X|Y]$ to $\E[X|Y=y]$}
We have established how to define $\E[X|Y] = \E[X|\sigma(Y)]$. We hope that this can help us to define a notion $\E[X|Y = y]$, i.e. a way of incorporating the \textit{information that the random variable $Y$ has actually attained the value $y$} into our knowledge about $X$.

The idea will be to take any $\omega \in \{\omega': Y(\omega') = y\}$ and set $\E[X|Y = y] = \E[X|Y](\omega)$. This needs to be justified, though. First we need the following

\begin{figure}
\begin{tikzpicture}
  \matrix (m) [matrix of math nodes,row sep=3em,column sep=4em,minimum width=2em]
  {
     (\Omega, \sigma(f)) & (\Omega', \mathcal A') \\
     (\mathbb{R}, \mathcal{B}(\mathbb{R})) &\\};
  \path[-stealth]
    (m-1-1) edge node [left] {$g$} (m-2-1)
            edge node [above] {$f$} (m-1-2)
    (m-1-2) edge node [below] {$\varphi$} (m-2-1);
\end{tikzpicture}  \hspace*{1cm}
\begin{tikzpicture}
  \matrix (m) [matrix of math nodes,row sep=3em,column sep=4em,minimum width=2em]
  {
     (\Omega, \sigma(Y)) & (\mathbb{R}, \mathcal{B}(\mathbb{R})) \\
     (\mathbb{R}, \mathcal{B}(\mathbb{R})) &\\};
  \path[-stealth]
    (m-1-1) edge node [left] {$\mathbb E[X|Y]$} (m-2-1)
            edge node [above] {$Y$} (m-1-2)
    (m-1-2) edge node [below] {$\varphi_X$} (m-2-1);
\end{tikzpicture}
\caption{Setting of the factorization lemma (left) and as used in our case (right).}
\end{figure}
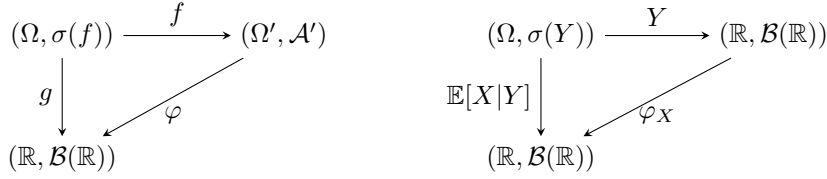

\begin{lemma}[Factorization lemma]   \label{lem:factorization}
  Let $(\Omega, \mathfrak{A})$ and $(\Omega', \mathfrak{A}')$ be two measure
  spaces and consider two maps $f : \Omega \rightarrow \Omega'$ and $g : \Omega \rightarrow \bar{\mathbb{R}} \equiv \mathbb{R} \cup \{
  \infty \}$. Then $g$ is $\sigma (f)$-measurable if and only if there is a measurable map $\varphi : (\Omega', \mathfrak{A}')
     \rightarrow (\bar{\mathbb{R}}, \mathcal{B} (\bar{\mathbb{R}}))$ such that
     \[ g = \varphi \circ f.\]
In this case we write symbolically $\varphi = g \circ f^{-1}$.
     \end{lemma}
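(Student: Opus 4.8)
The plan is to prove the two directions of the equivalence separately, with the ``only if'' direction being the substantive one.

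For the easy direction, suppose $g = \varphi \circ f$ with $\varphi$ measurable. Then for any Borel set $B \subseteq \bar{\mathbb{R}}$ we have $g^{-1}(B) = f^{-1}(\varphi^{-1}(B))$, and since $\varphi^{-1}(B) \in \mathfrak{A}'$ this is a set of the form $f^{-1}(A')$ with $A' \in \mathfrak{A}'$, hence lies in $\sigma(f)$. So $g$ is $\sigma(f)$-measurable.

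For the hard direction, I would use the standard measure-theoretic induction (the ``algebraic induction'' / ``standard machine'' over simple functions). First handle $g = 1_C$ for $C \in \sigma(f)$: by definition of $\sigma(f)$ there is $A' \in \mathfrak{A}'$ with $C = f^{-1}(A')$, and then $\varphi := 1_{A'}$ works since $1_{A'} \circ f = 1_{f^{-1}(A')} = 1_C$. Next, by linearity, for a nonnegative $\sigma(f)$-measurable simple function $g = \sum_{k=1}^n a_k 1_{C_k}$ with $C_k \in \sigma(f)$, pick $A'_k \in \mathfrak{A}'$ with $C_k = f^{-1}(A'_k)$ and set $\varphi := \sum_{k=1}^n a_k 1_{A'_k}$, which is $\mathfrak{A}'$-measurable and satisfies $\varphi \circ f = g$. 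For a general nonnegative $\sigma(f)$-measurable $g$, approximate it from below by an increasing sequence of simple functions $g_n \uparrow g$ (e.g. the canonical dyadic approximation), obtain corresponding $\varphi_n$ with $\varphi_n \circ f = g_n$, and set $\varphi := \limsup_{n} \varphi_n$ (or $\sup_n \varphi_n$), which is measurable as a countable supremum/limsup of measurable functions. On the range of $f$ the sequence $\varphi_n(f(\omega)) = g_n(\omega)$ is increasing and converges to $g(\omega)$, so $\varphi \circ f = g$; off the range of $f$ the value of $\varphi$ is irrelevant. Finally, for general (signed, extended-real-valued) $g$, decompose $g = g^+ - g^-$, apply the nonnegative case to each part to get $\varphi^+, \varphi^-$, and set $\varphi := \varphi^+ - \varphi^-$ (with a harmless convention on the set where both are $+\infty$, which one can simply define to be, say, $0$, or restrict attention to where at least one is finite).

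The main obstacle — really the only subtle point — is the bookkeeping in the limit step: one must ensure that $\varphi$ is genuinely $\mathfrak{A}'$-measurable (handled by using only countable sup/limsup operations on the already-constructed measurable $\varphi_n$) and that $\varphi \circ f = g$ holds \emph{everywhere} on $\Omega$, not just up to null sets — which is automatic here because at each $\omega$ we have the exact pointwise identity $\varphi_n(f(\omega)) = g_n(\omega)$ for all $n$ and can pass to the limit. No measure and no null sets enter at all, which is precisely why this is a purely measurable-structure statement. The symbolic notation $\varphi = g \circ f^{-1}$ is of course an abuse, since $\varphi$ is only pinned down on the image $f(\Omega)$; any measurable extension off the image serves equally well.
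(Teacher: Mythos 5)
The paper states this lemma without giving its own proof (it is the standard factorization lemma, quoted from the textbook literature), and your argument is exactly the canonical one: the easy direction via preimages, and the hard direction by the standard machine over indicators of sets $f^{-1}(A')$, simple functions, monotone limits, and the decomposition $g = g^+ - g^-$. It is correct, and you handle the two genuinely delicate points properly — that $\sup_n \varphi_n$ need only agree with $g$ on the range of $f$ (where the pointwise identity $\varphi_n(f(\omega)) = g_n(\omega)$ holds exactly), and that the difference $\varphi^+ - \varphi^-$ needs an $\infty - \infty$ convention only off that range.
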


\begin{figure}[h]
  \includegraphics[width=\textwidth]{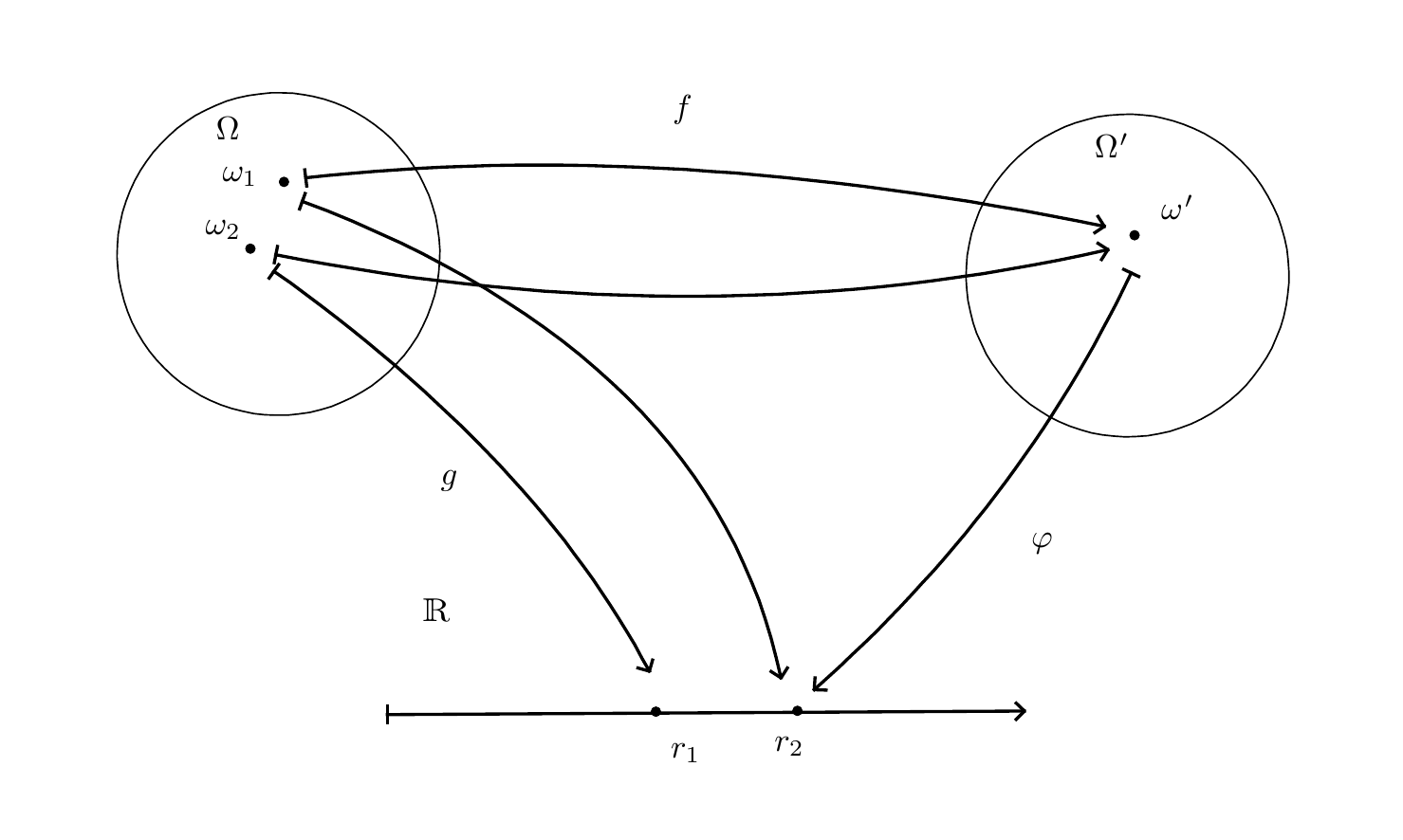}
  \caption{A setup not satisfying the assumptions of the factorization lemma: Assume $\Omega = \{
  \omega_1 \nocomma, \omega_2 \}$ and $\Omega' = \{ \omega' \}$. Choose
  standard $\sigma$-Algebras $\mathfrak{A}=\mathcal{P} (\Omega)$,
  $\mathfrak{A}' =\mathcal{P} (\Omega')$ and $\mathcal{B} (\mathbb{R})$. If $f$ and $g$ are defined as shown, there is no possible mapping $\varphi: \Omega' \to \R$ such that $g = \varphi \circ f$: The concatenation $\varphi \circ f$ is not equal to $g$, as $\varphi
  \circ f (\omega_2) = \varphi (\omega') = r_2$, whereas $g (\omega_2) = r_1$.
  This is due to the fact that $g$ is not $\sigma (f)$-measurable: $\sigma (f)
  = \{ f^{- 1} (A') |A' \in \mathfrak{A}' \} = \{ \emptyset, \{ \omega_1,
  \omega_2 \} \}$. Now for small $\varepsilon$, the set $R = (r_1 -
  \varepsilon, r_1 + \varepsilon)$ is open but $g^{- 1} (R) = \{ \omega_2 \}
  \nin \sigma (f)$. Intuitively, the problem is that $g$ and $f$ ``cluster''
  events in $\Omega$ differently: For $g$, both single events have different
  results whereas $f$ groups them together.}
\end{figure}

\begin{lemma}\label{lem:gcircfpreimage}
In the setting of the factorization lemma, $g(\{\omega\in \Omega: f(\omega) = y\})$ contains only one element. This means that the symbolical statement $\varphi = g\circ f^{-1}$  means $\varphi(\omega')$ is the unique element of the set $g(f^{-1}(\{\omega'\}))$.     
\end{lemma}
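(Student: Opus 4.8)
The plan is to prove the two claims of the lemma together, since the second is just a restatement of the first once the uniqueness is established. First I would fix $y \in \Omega'$ and consider the fiber $F_y := f^{-1}(\{y\}) = \{\omega \in \Omega : f(\omega) = y\}$. The goal is to show $g$ is constant on $F_y$, i.e. that $g(F_y)$ is a singleton (assuming $F_y \neq \emptyset$; if $F_y = \emptyset$ the image is empty and there is nothing to say, though one should note this edge case).

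The key step is to use the factorization lemma itself: since $g$ is $\sigma(f)$-measurable (this is the standing hypothesis ``in the setting of the factorization lemma''), Lemma \ref{lem:factorization} gives a measurable $\varphi : \Omega' \to \bar{\R}$ with $g = \varphi \circ f$. Then for any $\omega \in F_y$ we have $g(\omega) = \varphi(f(\omega)) = \varphi(y)$, which does not depend on the choice of $\omega \in F_y$. Hence $g(F_y) = \{\varphi(y)\}$, a single element. This immediately justifies the symbolic notation: $g \circ f^{-1}$ evaluated at $\omega' \in \Omega'$ is defined to be that unique element of $g(f^{-1}(\{\omega'\}))$, and it coincides with $\varphi(\omega')$.

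I would also include the short direct argument that avoids invoking the full factorization lemma, in case one wants Lemma \ref{lem:gcircfpreimage} to stand on its own: suppose $\omega_1, \omega_2 \in F_y$ but $g(\omega_1) \neq g(\omega_2)$, say $g(\omega_1) < g(\omega_2)$. Pick a rational (or just real) $r$ strictly between them. Then $\omega_1 \in g^{-1}((-\infty, r))$ but $\omega_2 \notin g^{-1}((-\infty, r))$, so the set $A := g^{-1}((-\infty, r)) \in \sigma(f)$ separates two points of the fiber $F_y$. But every set in $\sigma(f) = \{f^{-1}(A') : A' \in \mathfrak{A}'\}$ is a union of fibers of $f$, so it cannot contain $\omega_1$ without containing all of $F_y \ni \omega_2$ — contradiction. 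Therefore $g$ is constant on $F_y$.

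The main (and only mild) obstacle is bookkeeping around the empty-fiber case and making the phrase ``every set in $\sigma(f)$ is a union of fibers'' precise: $A \in \sigma(f)$ means $A = f^{-1}(A')$ for some $A' \in \mathfrak{A}'$, and then $\omega_1 \in A \iff f(\omega_1) \in A' \iff y \in A' \iff f(\omega_2) \in A' \iff \omega_2 \in A$, which is exactly the separation-impossibility used above. Everything else is immediate, so I expect this proof to be two or three sentences in final form; the reason to spell it out at all is that it is the precise statement that makes the later pointwise evaluation $\E[X|Y=y] := \E[X|Y](\omega)$ for any $\omega$ with $Y(\omega)=y$ well-defined.
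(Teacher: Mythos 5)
Your direct argument is essentially the paper's own proof: the paper assumes two distinct values $r_1\neq r_2$ in $g(f^{-1}(\{y\}))$, takes $A=g^{-1}(\{r_1\})$, and observes that $A$ separates two points of the same fiber and hence cannot be written as $f^{-1}(B)$, contradicting the $\sigma(f)$-measurability of $g$. Your choice of $A=g^{-1}((-\infty,r))$ instead of $g^{-1}(\{r_1\})$ is an immaterial variation (both are preimages of Borel sets), and your spelled-out chain $\omega_1\in f^{-1}(A')\Leftrightarrow f(\omega_1)\in A'\Leftrightarrow f(\omega_2)\in A'\Leftrightarrow\omega_2\in f^{-1}(A')$ is exactly the step the paper compresses into ``$\omega_1\in A$ would immediately imply $\omega_2\in A$''. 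Your first argument --- invoking Lemma \ref{lem:factorization} to produce $\varphi$ with $g=\varphi\circ f$ and reading off constancy on fibers --- is logically valid and shorter, but it runs against the grain of what Lemma \ref{lem:gcircfpreimage} is for: the point is to give the symbol $g\circ f^{-1}$ a concrete, choice-independent meaning, and the standard proof of the factorization lemma itself passes through precisely this fiber-constancy fact, so the self-contained separation argument is the one worth keeping. Your remark about the empty fiber ($g(f^{-1}(\{\omega'\}))=\emptyset$ when $\omega'$ is not in the range of $f$) is a fair quibble that the paper's statement glosses over.
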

\begin{proof}
Assume that there are $r_1 \neq r_2$ such that $r_i \in g(\{\omega\in \Omega: f(\omega) = y\})$. Then there are distinct $\omega_1\neq \omega_2$ such that $f(\omega_1) = f(\omega_2)=y$ and $g(\omega_i) = r_i$, $i=1,2$. We consider the set $A = g^{-1}(\{r_1\})$. By construction we know that $\omega_1 \in A$ but $\omega_2 \not\in A$. Then $A\not\in \sigma(f)$ because $A$ cannot be written in the form $A = f^{-1}(B)$ for $B\in \B(\R)$ as $\omega_1\in A$ would immediately imply $\omega_2\in A$ due too $f(\omega_1) = f(\omega_2)$.
\end{proof}

Assume $Y : (\Omega, \mathfrak{A}, \mathbb{P}) \rightarrow (E, \mathcal{E})$
is a random variable with values in a measurable space $E$ and $Z =\mathbb{E}
[X| \sigma (Y)] : (\Omega, \mathfrak{A}, \mathbb{P}) \rightarrow \mathbb{R}$
be the conditional expectation of a random variable $X$. According to the
factorization lemma, there exists a map $\varphi_X : E \rightarrow \mathbb{R}$
such that $\varphi_X$ is $(\mathcal{E}, \mathcal{B} (\mathbb{R}))$-measurable
and $\varphi_X \circ Y =\mathbb{E} [X| \sigma (Y)]$. 

\begin{definition}
  \label{def:condExp2}Conditional expectation with respect to a continous
  random variable's results
  
  Let $X \in L^1 (\mathbb{P})$ and $Y : (\Omega, \mathfrak{A}) \rightarrow (E,
  \mathcal{E})$. Set $Z
  =\mathbb{E} [X|Y]$. Note that $Z$ is $\sigma(Y)$-measurable by definition, i.e. by lemma \ref{lem:factorization} there is a function $\varphi_X$ such that $\varphi_X \circ  Y = \E[X|Y]$. We call $\varphi_X$ the {\tmem{conditional expectation of $X$ given measurements of $Y$}}, in symbols $\mathbb{E} [X|Y = y] := \varphi_X(y)$. Similarly, we write $\mathbb{P}
  (A|X = x) =\mathbb{E} [1_A |X = x]$ if we choose $X = \chi_A$ for $A \in \mathfrak{A}$.  
  
 This definition does not look very constructive, but we can alternatively write (justified by lemma \ref{lem:gcircfpreimage}) $\E[X|Y=y] = \E[X|Y](\omega)$ for \textit{any} $\omega \in \{\omega\in \Omega: Y(\omega) = y\}$.
\end{definition}

  Although the definition of $\mathbb{E} [X|Y = y]$ seems to define exactly what we need, this pointwise evaluation $\varphi_X(y)$ is not yet meaningful at all: $\varphi_X$ is only defined almost everywhere. This is similar to the case of a function $F\in L^1([0,1])$. Even if $F(x) = 0$ (in the sense of an identity of $L^1$ functions), this does not mean we can tack down $F(0.5)$ or any other fixed point. The same situation presents itself here: We cannot with any meaning define $\E[X|Y=y]$ for any fixed value $y$ because we could change it to any other value with no harm to the object $\varphi_X = \E[X|Y = \cdot]$.
  
Rather, this definition of $\mathbb{E} [X|Y = y]$ has to be interpreted not for $y$ pointwise but similar to how we look at $L^1$ functions $f(``x``)$. This entry $x$ is to be understood as the dependent ``macro'' integration variable (used in integration contexts like property (ii) of definition \ref{def:condExp}), not as something we can look at with a microscope.
  
  On the other hand this is not consoling at all because we are \textit{only} interested in pointwise evaluations because of what we want to do, which is to use pointwise measurement information $\{Y = y\}$.
  
At this point, textbooks usually do one of two things:
\begin{enumerate}[a)]
\item Only look at the special case of conditional distributions $\P(X\leq x|Y=y)$, define the notion of regular conditional distribution and employ some slightly occult topological arguments to justify its existence.
\item Define conditional densities $f_{X|Y=y}(x) = \frac{f_{X,Y}(x,y)}{f_Y(y)}$ which make sense pointwise and show that they satisfy the criteria for a conditional probability. 
\end{enumerate}
Both approaches do \textit{not} solve the problem of conditional \textit{expectations} having no pointwise meaning.
  
In addition, there is a certain mismatch between rigorous derivation of conditional probabilities in a) and the ad-hoc way conditional densities (b)) are used in practiced, as has been remarked for example in \cite{chang1997conditioning}.  
  
  In order to facilitate this, we use the Lebesgue--Besicovich differentiation theorem. This will allows us to define a pointwise version of conditional expectation and we can show that this recovers the usual conditional density.
  \begin{lemma}[Lebesgue--Besicovich differentiation theorem]
  
  Let $f\in L^1(\R, \mu)$. Then for $\mu$-almost-all $x\in \R$, we have 
  \[ f(x) = \lim_{U_x\to x} \frac{1}{\mu(U_x)}\int_{U_x} f(y) \d\mu(y).\]
  where $U_x$ is an arbitrary sequence of neighborhoods with vanishing diameter around $x$.
  The function $\tilde f(x)$ defined by the right hand side (which is almost everywhere identical to $f$) is called the ``precise representative'' of $f$ (see \cite{evans2015measure}).
  \end{lemma}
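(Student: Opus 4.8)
The plan is to prove the stronger statement that $\mu$-almost every $x\in\R$ is a \emph{Lebesgue point} of $f$, i.e.
\[ \lim_{r\to 0}\frac{1}{\mu(B(x,r))}\int_{B(x,r)}|f(y)-f(x)|\,\d\mu(y)=0,\qquad B(x,r):=(x-r,x+r), \]
and then deduce the displayed identity from the triangle inequality $\bigl|\tfrac{1}{\mu(U_x)}\int_{U_x}f\,\d\mu-f(x)\bigr|\le\tfrac{1}{\mu(U_x)}\int_{U_x}|f-f(x)|\,\d\mu$. Throughout I would take $\mu$ to be a Radon (locally finite, Borel regular) measure on $\R$, as otherwise the averages need not even be defined; and I would phrase the hypothesis on the neighborhoods as $U_x\ni x$, $\operatorname{diam}U_x\to 0$, and $U_x$ shrinking \emph{regularly}, meaning $\mu(U_x)\ge c\,\mu(B(x,\operatorname{diam}U_x))$ for some fixed $c>0$ (automatic e.g.\ for intervals centered at $x$) --- without such a condition the limit can genuinely fail for non-doubling $\mu$.

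The argument rests on two ingredients. The first is a \emph{weak-type $(1,1)$ maximal inequality}: for $h\in L^1(\mu)$, the maximal function $M_\mu h(x):=\sup_{r>0}\frac{1}{\mu(B(x,r))}\int_{B(x,r)}|h|\,\d\mu$ satisfies $\mu(\{M_\mu h>\lambda\})\le\frac{C}{\lambda}\,\|h\|_{L^1(\mu)}$ with $C$ an absolute constant. Because $\mu$ may fail to be doubling, the elementary Vitali $5r$-covering trick is unavailable; instead one invokes the Besicovich covering theorem on $\R$ --- from a fine cover of $\{M_\mu h>\lambda\}$ by balls on each of which the $|h|$-average exceeds $\lambda$, extract a countable subfamily still covering the set with overlap bounded by a purely dimensional constant, then sum. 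The second ingredient is the \emph{density of $C_c(\R)$ in $L^1(\mu)$}, valid for every Radon measure.

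Granting these, I would fix $\eps>0$, choose $g\in C_c(\R)$ with $\|f-g\|_{L^1(\mu)}<\eps$, and set $h:=f-g$. For the continuous function $g$ one has $\frac{1}{\mu(B(x,r))}\int_{B(x,r)}|g(y)-g(x)|\,\d\mu(y)\to 0$ as $r\to 0$ at \emph{every} $x$ in the support of $\mu$. Using $|f(y)-f(x)|\le|g(y)-g(x)|+|h(y)|+|h(x)|$ and writing $\Lambda f(x):=\limsup_{r\to 0}\frac{1}{\mu(B(x,r))}\int_{B(x,r)}|f(y)-f(x)|\,\d\mu(y)$, this gives $\Lambda f(x)\le M_\mu h(x)+|h(x)|$. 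Hence, for every $\lambda>0$, by the maximal inequality and Chebyshev's inequality,
\[ \mu(\{\Lambda f>\lambda\})\le\mu(\{M_\mu h>\tfrac{\lambda}{2}\})+\mu(\{|h|>\tfrac{\lambda}{2}\})\le\frac{2C+2}{\lambda}\,\|f-g\|_{L^1(\mu)}<\frac{2C+2}{\lambda}\,\eps. \]
Since $\eps>0$ is arbitrary, $\mu(\{\Lambda f>\lambda\})=0$ for every $\lambda>0$, so $\Lambda f=0$ $\mu$-a.e.; thus $\mu$-a.e.\ point is a Lebesgue point, the precise representative $\tilde f$ is well-defined $\mu$-a.e., and $\tilde f=f$ $\mu$-a.e. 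Finally, at a Lebesgue point $x$ the regular-shrinking condition yields $\frac{1}{\mu(U_x)}\int_{U_x}|f-f(x)|\,\d\mu\le\frac{1}{c}\cdot\frac{1}{\mu(B(x,\operatorname{diam}U_x))}\int_{B(x,\operatorname{diam}U_x)}|f-f(x)|\,\d\mu\to 0$, and the triangle inequality above closes the proof.

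The step I expect to be the real obstacle is the covering lemma behind the maximal inequality. For Lebesgue measure (or any doubling $\mu$) a one-line Vitali argument suffices, but the lemma is stated for an \emph{arbitrary} Radon measure precisely because that is the generality needed in the conditional-density application (where $\mu$ is a general marginal law), and this forces the Besicovich covering theorem, whose proof --- packing disjoint balls and bounding their multiplicity geometrically --- is the one place the argument goes beyond elementary measure theory and calculus. Everything else (density of $C_c$, Chebyshev, the $\eps/2$ splitting, and the transfer from balls to regularly shrinking neighborhoods) is routine.
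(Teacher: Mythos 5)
The paper does not prove this lemma at all --- it is quoted as a known result with a pointer to Evans--Gariepy, and your argument is precisely the standard proof given there: Besicovitch covering theorem $\Rightarrow$ weak-type $(1,1)$ maximal inequality for a general Radon measure, density of $C_c(\R)$ in $L^1(\mu)$, and the $\eps$-splitting to show $\mu$-a.e.\ point is a Lebesgue point. The argument is correct, and you rightly identify the Besicovitch covering theorem (rather than Vitali) as the essential ingredient, since the relevant measure in the paper's application is the law $\P\circ Y^{-1}$, which need not be doubling. One genuinely useful addition on your part: the paper's phrase ``arbitrary sequence of neighborhoods with vanishing diameter around $x$'' is too weak as stated, and your regular-shrinking hypothesis $\mu(U_x)\ge c\,\mu(B(x,\operatorname{diam}U_x))$ is the standard repair; it is harmless for the paper's purposes because the only neighborhoods ever used downstream (in Lemma \ref{lem:condexpptw}) are the symmetric intervals $(y-\eps,y+\eps)$ centered at $y$, for which the condition holds trivially with $c=1$.
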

  
  The preceding lemma tells us that we can pick the precise representative of $\varphi_X$ which we can indeed evaluate pointwise (by the approximation procedure given by the Lebesgue-Besicovich differentiation theorem). The beauty of this construction is that this ties together the object generated by the factorization lemma and the formula for conditional densities (which is usually proposed in an ad-hoc way and then proved to be compatible with the notion of conditional expectation by checking the assumptions). It shows in particular that there is a version of conditional expectation which is defined pointwise in a meaningful way.
  
  \begin{lemma} \label{lem:condexpptw}
  If $Y$ is such that the event $\{|Y-y|<\eps\}$ has positive measure for each $\eps > 0$, then we can pointwise define $\E[X|Y=y] := \lim_{\epsilon\to 0} \E[X|Y\in(y-\epsilon, y + \epsilon)]$. In particular $\P[A|Y = y] = \lim_{\epsilon\to 0} \P[A|Y\in(y-\epsilon, y + \epsilon)]$ and if the conditional distribution of a random variable $Z$ given $Y=y$ has density given by $f_{Z|Y=y}(z) = \frac{f_{Z,Y}(z,y)}{f_Y(y)}$ if $(Z,Y)$ has a continuous density.
  \end{lemma}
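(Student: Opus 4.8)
The plan is to apply the Lebesgue--Besicovich differentiation theorem to the function $\varphi_X$ produced by the factorization lemma, using the pushforward measure $\mu := Y_*\P$ on $\R$ as the reference measure. First I would observe that $\varphi_X \in L^1(\R,\mu)$: this follows from the change-of-variables formula, since $\int_\R |\varphi_X|\,\d\mu = \int_\Omega |\varphi_X\circ Y|\,\d\P = \E[|\E[X|Y]|] \le \E[|X|] < \infty$ by the conditional Jensen inequality (or simply by property (ii) of Definition \ref{def:condExp} applied to $A = \{\E[X|Y]\ge 0\}$ and its complement). Having established integrability, the Lebesgue--Besicovich lemma gives, for $\mu$-almost every $y$,
\[
  \varphi_X(y) \;=\; \lim_{\eps\to 0}\frac{1}{\mu((y-\eps,y+\eps))}\int_{(y-\eps,y+\eps)}\varphi_X\,\d\mu,
\]
and I would designate the right-hand side as the precise representative, which is well defined at every $y$ for which the limit exists.

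The second step is to identify the averaged integral with $\E[X\mid Y\in(y-\eps,y+\eps)]$. Writing $B_\eps := \{Y\in(y-\eps,y+\eps)\} = Y^{-1}((y-\eps,y+\eps))$, the hypothesis guarantees $\P(B_\eps)>0$, so the elementary conditional expectation of Definition \ref{def:condExpRVRegular} applies and $\E[X\mid B_\eps] = \E[1_{B_\eps}X]/\P(B_\eps)$. Now $\P(B_\eps) = \mu((y-\eps,y+\eps))$ by definition of the pushforward, and $\E[1_{B_\eps}X] = \E[1_{B_\eps}\E[X|Y]] = \E[1_{B_\eps}(\varphi_X\circ Y)]$ using property (ii) of Definition \ref{def:condExp} (legitimate since $B_\eps\in\sigma(Y)$), which by change of variables equals $\int_{(y-\eps,y+\eps)}\varphi_X\,\d\mu$. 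Hence the averaged integral above is exactly $\E[X\mid Y\in(y-\eps,y+\eps)]$, and the displayed limit reads $\E[X|Y=y] = \varphi_X(y) = \lim_{\eps\to 0}\E[X\mid Y\in(y-\eps,y+\eps)]$, which is the first assertion; specializing $X = 1_A$ gives the statement for $\P[A|Y=y]$.

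For the density claim, I would take $(Z,Y)$ with joint continuous density $f_{Z,Y}$ and marginal $f_Y(y) = \int_\R f_{Z,Y}(z,y)\,\d z$, and apply the already-proven identity to $X = 1_{\{Z\in B\}}$ for a fixed Borel set $B$. Then $\P[Z\in B\mid Y\in(y-\eps,y+\eps)]$ equals
\[
  \frac{\int_{y-\eps}^{y+\eps}\!\!\int_B f_{Z,Y}(z,\eta)\,\d z\,\d\eta}{\int_{y-\eps}^{y+\eps} f_Y(\eta)\,\d\eta},
\]
and continuity of $f_{Z,Y}$ lets one pass to the limit $\eps\to 0$ (dividing numerator and denominator by $2\eps$ and using that both integrands are continuous at $\eta=y$, so the one-dimensional averages converge to the integrand values), giving $\int_B f_{Z,Y}(z,y)\,\d z / f_Y(y) = \int_B f_{Z|Y=y}(z)\,\d z$ whenever $f_Y(y)>0$. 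Thus the precise representative of the conditional distribution has the claimed density.

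The main obstacle I anticipate is bookkeeping around the reference measure: the Lebesgue--Besicovich theorem must be invoked with respect to $\mu = Y_*\P$ rather than Lebesgue measure, and one must check that the positivity hypothesis $\P(\{|Y-y|<\eps\})>0$ is precisely what makes the Besicovich-type averaging denominators nonzero, so that the ``$U_x\to x$'' family of the theorem can legitimately be taken to be the intervals $(y-\eps,y+\eps)$. A secondary subtlety is that the theorem only yields the limit for $\mu$-almost every $y$; the point of the statement is that we nonetheless \emph{define} $\E[X|Y=y]$ by this limit wherever it exists, obtaining a genuine pointwise object that agrees $\mu$-a.e.\ with $\varphi_X$ — exactly the $L^1$-versus-$C[0,1]$ analogy made in the introduction. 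The density computation additionally needs the joint continuity hypothesis to upgrade the a.e.\ statement to an everywhere (on $\{f_Y>0\}$) statement, which is the reason that hypothesis appears.
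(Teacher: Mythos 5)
Your proposal is correct and follows essentially the same route as the paper: apply the Lebesgue--Besicovich theorem to $\varphi_X$ with respect to the pushforward measure $\mu\circ Y^{-1}$, identify the averaged integral with $\E[X\mid Y\in(y-\eps,y+\eps)]$ via the defining property of conditional expectation, and specialize to $X=1_{\{Z\in B\}}$ with the continuity of the joint density to obtain the conditional density formula. Your explicit verification that $\varphi_X\in L^1(\R,\mu)$ and your explicit appeal to property (ii) of Definition \ref{def:condExp} in the identification step are details the paper leaves implicit, but they do not change the argument.
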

  \begin{proof} Consider the function $\varphi_X$ on the measure space $(\R, \B(\R), \mu\circ Y^{-1})$. Then by the Lebesgue--Besicovich differentiation theorem,
  \begin{align*}
 \varphi_X(y) &= \lim_{\epsilon\to 0} \frac1{(\mu\circ Y^{-1})((y-\epsilon, y + \epsilon))}\int_{y-\epsilon}^{y + \epsilon} \varphi_X(z)\d(\mu\circ Y^{-1})(z)\\
  &=  \lim_{\epsilon\to 0} \frac{\int_{\{Y \in(y-\epsilon, y + \epsilon)\} }\varphi_X(Y(\omega))\d\mu(\omega)}{\mu(\{Y\in (y-\epsilon, y+\epsilon)\}}\\
  &= \lim_{\epsilon\to 0} \frac{\int_{\{Y \in(y-\epsilon, y + \epsilon)\} }\E[X|Y](\omega)\d\mu(\omega)}{\mu(\{Y\in (y-\epsilon, y+\epsilon)\}}\\
  &= \lim_{\epsilon\to 0} \E[X|Y\in(y-\epsilon, y+\epsilon)]
  \end{align*}
  where the last step is due to definition \ref{def:condExpRVRegular} of conditional expectation with respect to the non-singular events $Y\in(y-\epsilon, y+\epsilon)$ which proves the first claim. The second claim is a direct result of setting $X = \chi_A$ and the third claim follows from further setting $A = \{Z \in B\}$. Then 
  \begin{align*}
  \P(\{Z\in B\}|Y=y) &= \lim_{\epsilon\to 0}\P(\{Z\in B\}|Y\in(y-\epsilon, y+\epsilon))\\
  &= \lim_{\epsilon\to 0} \frac{\int_{B\times (y-\epsilon, y+\epsilon)} f_{Z,Y}(z,\tilde y)d\tilde y dz}{\int_{(y-\epsilon, y+\epsilon)}f_Y(y) dy }\\
  &=  \frac{\int_B f_{Z,Y}(z, y)dz}{f_Y(y)}
  \end{align*}
  from which we can see that the conditional distribution of $Z$ given $Y=y$ has density as proposed.
  \end{proof}
  
\begin{figure}[h]
  \includegraphics[width=\textwidth]{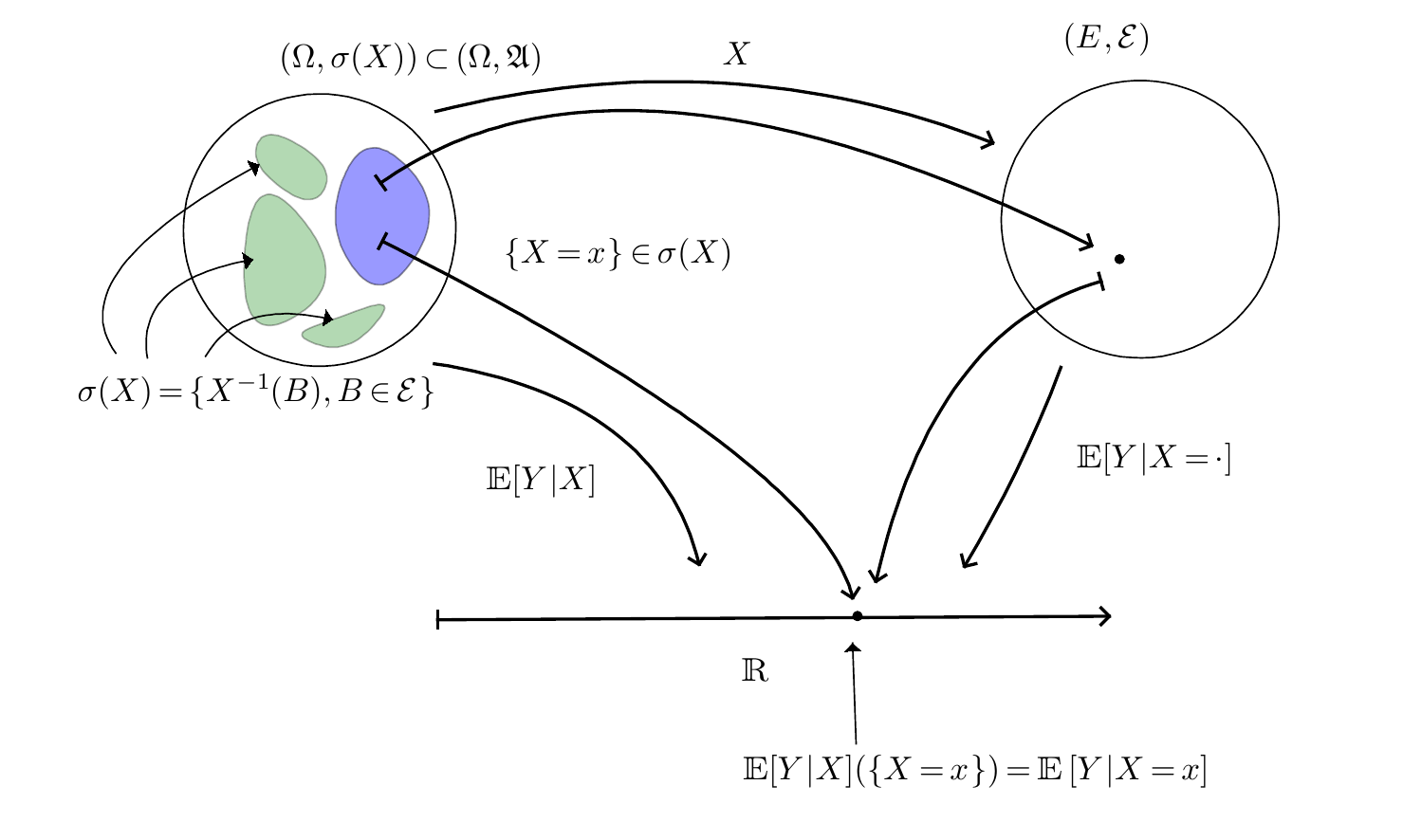}
  \caption{Conditional expectation w.r.t. singular events as concatenation of
  cond. exp. w.r.t. a random variable's $\sigma$-Algebra $\sigma (X)$ and the
  inverse image of $X$.}
\end{figure}

To recap, in order to calculate $\E[X|Y=y]$, we did the following:
\begin{enumerate}
\item Define $\E[X|\sigma(Y)]$ as an object in $L^1(\Omega, \sigma(Y), \P)$ which is constant on level sets of $Y$, i.e. on $\{\omega\in \Omega: Y(\omega) = y\}$ for any $y\in \R$. 
\item Define $y\mapsto \E[X|Y=y]$ as the value on such level sets. This is still not a pointwise definition because $\{Y=y\}$ is a set of measure $0$. In other words, the mapping $\varphi_X : y\mapsto \E[X|Y=y]$ is considered as an object $\varphi_X \in L^1(\R, \B(\R), \P\circ Y^{-1})$.
\item Show that (every version of) $y\mapsto \E[X|Y=y]$ is almost everywhere pointwise identical to something we can explicitly compute, i.e. the limits of $\E[X|Y\in (y-\epsilon, y+\epsilon)]$ for $\epsilon \to 0$.
\item Redefine $\varphi_X$ as this specific version which now is a function which can be evaluated pointwise.
\end{enumerate}

\subsection{A negative result for conditional expectations with respect to generic $\sigma$-algebras}

The procedure outlined above is strongly dependent on the fact that we conditioned on the sigma-algebra of a random variable. We could imagine doing something similar in a the more general case of conditioning with respect to an arbitrary sigma-algebra: Consider a random variable $X$ and an event $A$ with measure $0$. Analogously to above, we could choose a coarser sigma-algebra $\F$ such that $A\in \F$ and consider $\E[X|\F]$. Then we could try to insert $\omega\in A$ in this new random variable (similarly how we could insert an arbitrary $\omega\in\{Y=y\}$ into $\E[X|Y]$ in order to obtain $\E[X|Y=y]$. This can fail to work due to two different reasons: Either $\F$ is too fine or too coarse. $\sigma$-algebras generated by a random variable seem to be the only viable case.

We will use an allegory from real analysis to illustrate what can go wrong: Consider the function $f(x, y) = \theta$ where $x=r\cos\theta$ and $y=r\sin\theta$ for $r>0$ and $\theta\in[0,2\pi)$ (see figure \ref{fig:spiral}) defined on $\R^2\setminus\{0\}$. We will compare evaluation/extension of $f$ in $0$ with evaluation of a conditional expectation in a singular event. Note how both are well-defined as Lebesgue-integrable objects but they don't completely allow ``pointwise'' evaluation: $\E[X|\mathcal A]$ is not meaningful on singular sets and $f$ is not defined in $0$. We will see how choosing a $\sigma$-algebra $\mathcal A$ is similar to restricting the space $\R^2$ for the function example in order to allow extension and evaluation of $f$ in $0$.
\begin{figure}[hbtp]
\centering
\includegraphics[width=0.5\textwidth]{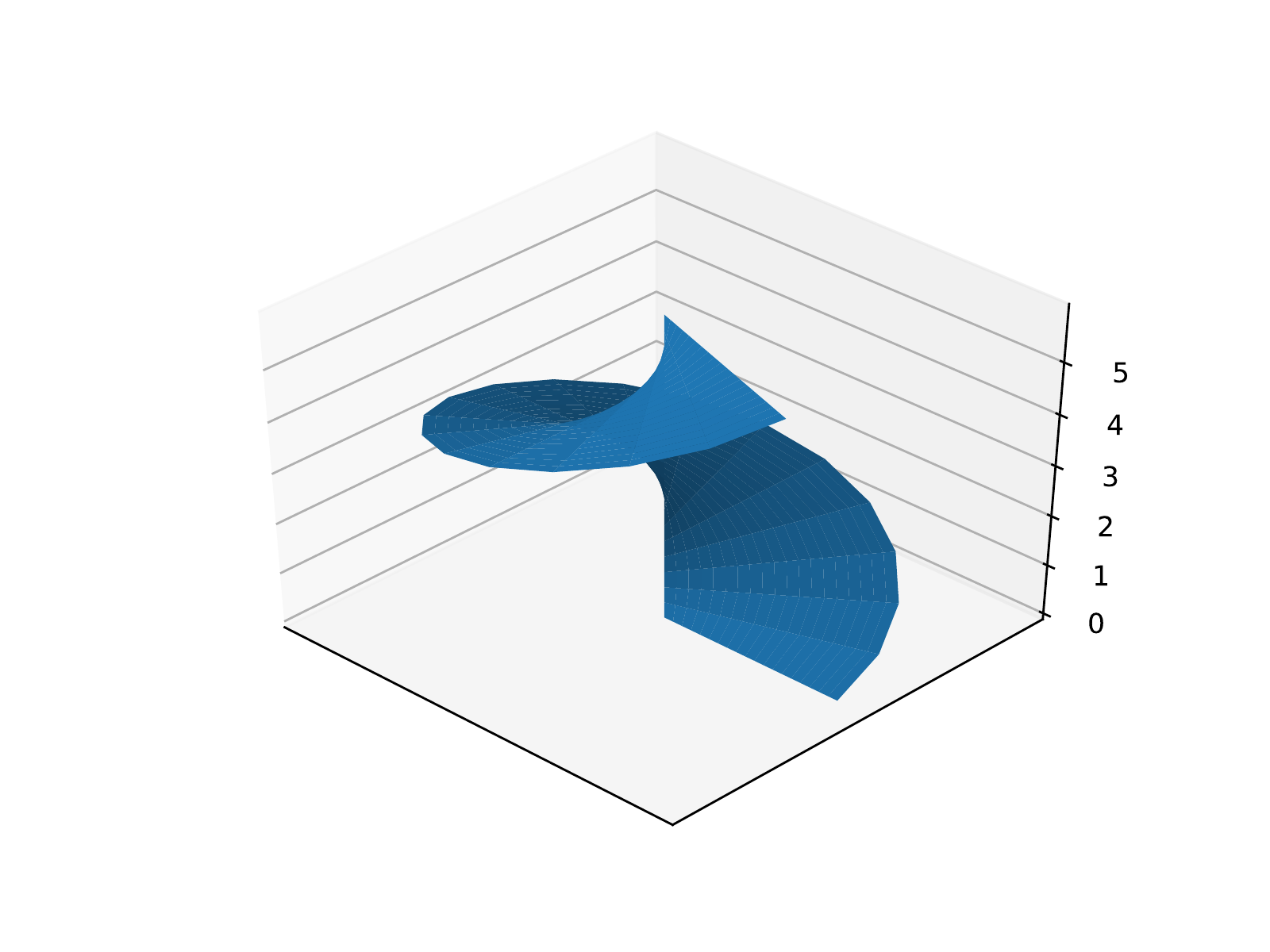}
\caption{The function $f(r, \theta) = \theta$ plotted in cartesian coordinates.}
\label{fig:spiral}
\end{figure}
\paragraph{$\F$ is too fine.}
We can, for example, choose as conditioning algebra the original $\sigma$-algebra $\F = \A$. Then $\E[X|\F] = X$. Now $\E[X|\F](\omega) = X(\omega)$ depends strongly on the specific choice of $\omega \in A$. Indeed, if it were arbitrary which $\omega\in A$ we pick, this would mean that $A\in \sigma(X)$, i.e. $A = \{X\in B\}$ for some $B\in \B(\R)$. But then we would actually be conditioning $X$ on $X$ itself, which is not interesting. Hence, the choice of $\omega$ is not well-defined.

Secondly, we cannot do any kind of continuation argument as in the case above: There may be multiple ways of approximating $A$ by sets in $\F$ with nonzero measure. The Borel--Kolmogorov paradox \cite{billingsley2008probability,gyenis2017conditioning,proschan1998expect,rao2005conditional} is witness to that fact. 

This can be likened to the continuity properties of the function $f$ from above. The value of $f(0,0)$ does not only not exist, but for any values $z\in [0,2\pi]$ we can find a sequence $z_n$ such that $\lim f(z_n) = z$. Similarly: If the sigma-algebra on which we condition is too fine, then there are too many possibilities of approximating the event which we condition on. If we condition on some random variable, on the other hand, then there really is just one ``direction'' of approximation. This is similar to the case where we only look at the function $f$ on one line through the origin: $f|_L$ where $L = \{(x,y): y = l\cdot x\}\setminus\{0\}$ for some $l$. Then the function $f|_{L}$ is continuous and can be extended in the origin.

\paragraph{$\F$ is too coarse.}
What if we choose $\F = \{\emptyset, A, A^c, \Omega\}$? This is a valid sigma-algebra and it seems that we should be able to define $\E[X|A] = \E[X|\F](\omega)$ for any $\omega \in A$. As the conditional expectation needs to be $\F$-measurable, this means that it only takes two values, depending on whether $\omega \in A$ or $\omega \in A^c$. But as it is also only unique up to sets of measure $0$, the value on $A$ is completely arbitrary: We can without hesitation just set \[\E[X|\F](\omega) = \begin{cases}17 & \text{ if } \omega \in A\\ \E[X] &\text{ else }\end{cases}\]
or any other numerical value on $A$. This is not new: Before, too, we could not assign a fixed numerical value to the set of measure $0$. But now we don't have any approximating sets: The only allowed sets are $A$ and $A^c$. In the situation of figure \ref{fig:spiral}, this would amount to restricting $f$ on the set $\{0\} \cup S^1$, where $S^1$ is the sphere (without interior) of radius $1$. Here, we cannot give the function $f$ a value in $0$ because there is no sequence of allowed points converging to $0$.

This can be compared to \cite{tjur1974conditional,tjur1975constructive} where conditional distributions were defined by approximation via nets of neighborhoods, but only in the more specific context of conditional distributions with respect to another random variable.

\paragraph{$\F$ is just right} This is the case of $\F = \sigma(Y)$ for some $Y$: This restricts the family of neighborhoods of $A$ to a manageable size such that uniqueness of the approximation holds but not too much that there is no such sequence of neighborhoods.

\bibliographystyle{abbrv}
\bibliography{lit}

\begin{thebibliography}{1}

\bibitem{billingsley2008probability}
P.~Billingsley.
\newblock {\em Probability and measure}.
\newblock John Wiley \& Sons, 2008.

\bibitem{chang1997conditioning}
J.~T. Chang and D.~Pollard.
\newblock Conditioning as disintegration.
\newblock {\em Statistica Neerlandica}, 51(3):287--317, 1997.

\bibitem{evans2015measure}
L.~C. Evans and R.~F. Gariepy.
\newblock {\em Measure theory and fine properties of functions}.
\newblock CRC press, 2015.

\bibitem{gyenis2017conditioning}
Z.~Gyenis, G.~Hofer-Szab{\'o}, and M.~R{\'e}dei.
\newblock Conditioning using conditional expectations: the borel--kolmogorov
  paradox.
\newblock {\em Synthese}, 194(7):2595--2630, 2017.

\bibitem{klenke2013probability}
A.~Klenke.
\newblock {\em Probability theory: a comprehensive course}.
\newblock Springer Science \& Business Media, 2013.

\bibitem{proschan1998expect}
M.~A. Proschan and B.~Presnell.
\newblock Expect the unexpected from conditional expectation.
\newblock {\em The American Statistician}, 52(3):248--252, 1998.

\bibitem{rao2005conditional}
M.~M. Rao.
\newblock {\em Conditional measures and applications}.
\newblock Chapman and Hall/CRC, 2005.

\bibitem{tjur1974conditional}
T.~Tjur.
\newblock {\em Conditional probability distributions}, volume~2.
\newblock Institute of Mathematical Statistics, University of Copenhagen, 1974.

\bibitem{tjur1975constructive}
T.~Tjur.
\newblock {\em A constructive definition of conditional distributions}.
\newblock Institute of Mathematical Statistics, University of Copenhagen, 1975.

\end{thebibliography}

\end{document}